\documentclass[11pt,reqno]{amsart}
\setlength{\textheight}{23cm}
\setlength{\textwidth}{16cm}
\setlength{\topmargin}{-0.8cm}
\setlength{\parskip}{0.3\baselineskip}
\hoffset=-1.4cm

\usepackage{diagram}
\usepackage{amssymb}
\usepackage{cite, url, hyperref}
\usepackage{xcolor}
\usepackage[pagewise]{lineno}
\usepackage{appendix}
\numberwithin{equation}{section}

\theoremstyle{definition}
\newtheorem{thm}{Theorem}[section]
\newtheorem{cor}[thm]{Corollary}
\newtheorem{lem}[thm]{Lemma}
\newtheorem{exa}[thm]{Example}
\newtheorem{prop}[thm]{Proposition}
\newtheorem{defi}[thm]{Definition}
\newtheorem{rem}[thm]{Remark}

\DeclareMathOperator{\Pic}{\mathrm{Pic}}

\DeclareMathOperator{\pr}{\mathrm{pr}}

\DeclareMathOperator{\mo}{\mathcal{O}}

\newcommand{\mbb}[1]{\mathbb{#1}}
\newcommand{\mb}[1]{\mathbb{#1}}
\newcommand{\mc}[1]{\mathcal{#1}}
\newcommand{\mr}[1]{\mathrm{#1}}
\newcommand{\ov}[1]{\overline{#1}}
\newcommand{\wt}[1]{\widetilde{#1}}
\newcommand{\mf}[1]{\mathfrak{#1}}

 \newcommand{\new}[1]{{\color{blue} \sf Ananyo: [#1]}}





\begin{document}

\title{Extended operational Chow group and Lefschetz $(1,1)$-theorem}

\author[A. Dan]{Ananyo Dan}

\address{CUNEF Universidad, C. de Leonardo Prieto Castro, 2, Moncloa - Aravaca, 28040 Madrid, Spain}

\email{dan.ananyo@cunef.edu}

\author[I. Kaur]{Inder Kaur}

\address{School of Mathematics \& Statistics, University of Glasgow, Glasgow G12 8QQ, U.K.}

\email{inder.kaur@glasgow.ac.uk}

\subjclass[2010]{$14$C$15$, $14$C$30$, $32$S$35$, $32$G$20$}

\keywords{Lefschetz $(1,1)$-theorem,  Limit mixed Hodge
structures, Operational Chow group, Cycle class map, singular varieties}

\date{\today}

\begin{abstract}
Let $X$ be a singular, projective variety. For every $p>0$, $H^{2p}(X,\mb{Q})$ is equipped with a mixed Hodge structure.
The elements of $\mr{Gr}^W_{2p}H^{2p}(X,\mb{Q}) \cap H^{p,p} \mr{Gr}^W_{2p}H^{2p}(X,\mb{C})$ will be called \emph{Hodge (p,p)-classes}.
The purpose of this article, is to study the Bloch-Gillet-Soul\'{e} (BGS) cycle class map from the 
$p$-th operational Chow group $A^p(X)$ to the space of $(p,p)$-Hodge classes. 
We show that if $p=1$ and $X$ is a normal surface with at worst rational singularities, then the BGS 
cycle class map is surjective. This extends the Lefschetz $(1,1)$-theorem to the setup of rational surface singularities.
However, the BGS map is not always surjective. For this reason we introduce \emph{extended operational Chow group} 
$A^p_{\mr{ext}}(X)$ which contains the operational Chow group. We show that the BGS cycle class map extends to 
$A^p_{\mr{ext}}(X)$. Moreover, if $p=1$ and $X$ has at worst isolated singularity (not necessarily a surface), 
then the extended BGS map is surjective. This further extends the Lefschetz $(1,1)$-theorem to the case of 
isolated singularities.
\end{abstract}

\maketitle

\section{Introduction}

The underlying field will always be $\mbb{C}$.
One of the oldest results in algebraic geometry is the Lefschetz $(1,1)$-theorem.
It states that given a smooth projective variety $X$, every integral $(1,1)$ Hodge class on $X$
is the first Chern class of a line bundle on $X$.
The importance of Lefschetz $(1,1)$-theorem cannot be overstated, it gave rise to the still unsolved,
famous Millennium problem called the Hodge conjecture.
The purpose of this article is to generalize the classical Lefschetz $(1,1)$-theorem to the case of singular varieties.

There have already been various attempts to obtain a Lefschetz $(1,1)$-theorem for singular varieties.
Since there exists an exponential exact sequence for singular varieties, the 
most obvious attempt would be to check if the first Chern class map 
from $\mr{Pic}(X)$ to $H^2(X,\mb{Z})$ maps surjectively to $F^1H^2(X,\mb{C}) \cap H^2(X,\mb{Z})$.
Totaro \cite{totchow} showed that this holds true for all compact toric varieties. 
However, there are examples when the surjectivity fails (see \cite[p. $18$]{totchow}).
Moreover, since $X$ is singular, $H^2(X,\mb{Q})$ is not equipped with a pure Hodge structure.
To consider $(1,1)$ classes, one must restrict to the weight $2$ graded piece $\mr{Gr}^W_2H^2(X,\mb{Q})$. This weight 
graded piece is equipped with a pure Hodge structure. In this article, by \emph{Hodge (1,1)-classes} we will mean 
elements in $H^{1,1}\mr{Gr}^W_2H^2(X,\mb{Q})$. Since all the elements in  $F^1H^2(X,\mb{C}) \cap H^2(X,\mb{\mb{Q}})$
are of weight $2$, it is contained in $H^{1,1}\mr{Gr}^W_2H^2(X,\mb{Q})$. However, the two $\mb{Q}$-vector spaces do not always 
coincide (see \cite[\S $7$]{totchow}). In other words, there are more Hodge $(1,1)$-classes than those  in 
$F^1H^2(X,\mb{C}) \cap H^2(X,\mb{\mb{Q}})$. 

In this article, following Totaro \cite{totchow} we focus on the Bloch-Gillet-Soule (BGS) cycle class map \cite{soub}, instead of the first Chern class map.
Given a complex variety, the BGS cycle class map goes from the operational Chow group $A^p(X)$ (see \cite[Chapter $17$]{fult})
to $\mr{Gr}^W_{2p}H^{2p}(X,\mb{Q})$, the $2p$-th weight graded piece of $H^{2p}(X,\mb{Q})$. 
Note that, $\mr{Gr}^W_{2p}H^{2p}(X,\mb{Q})$ is a subquotient of 
$H^{2p}(X,\mb{Q})$ and there is no 
functorial lift of the BGS map to $H^{2p}(X,\mb{Q})$
(see \cite[\S $7$]{totchow}). The example of Totaro also shows that, in the case $p=1$,
the restriction of the 
BGS cycle class map to $\mr{Pic}(X)$ is not surjective. However, we prove:

\begin{thm}[see Corollary \ref{cor:rat}]
    If $X$ is a normal, projective surface with at worst rational singularity, then the Bloch-Gillet-Soul\'{e} cycle class map:
    \[\mr{cl}_1:A^1(X) \to H^{1,1}\mr{Gr}^W_2H^2(X,\mb{Q})\, \mbox{ is surjective},\]
    where $A^1(X)$ is the operational Chow group.
\end{thm}

See Theorem \ref{thm:rat} for a more general statement.
As a result, the BGS cycle class map for the examples mentioned above (including those where the first Chern class map is not surjective) is surjective (see Example \ref{exa:NS}). 
Unfortunately, the BGS cycle class map is also not always surjective (see Theorem \ref{thm:non-surj}). 
To solve this problem we introduce the notion of \emph{extended operational Chow group}.
Given a projective variety $X$, this is the inverse limit over all resolutions $\wt{X}$ of $X$ of 
a particular subset of $A^p(\wt{X})$ containing $A^p(X)$ (see Definition \ref{defi:ext}). In section \ref{section: lefshetz one,one} we discuss why the extended operational Chow group is a natural object to consider and  is indeed the largest group of cycles which has a natural morphism to the cohomology group of $X$.
Denote by $A^p_{\mr{ext}}(X)$ the \emph{extended operational Chow group of codimension $p$}. 
We show that the extended operational Chow group in codimension $1$ is functorial (see Theorem \ref{thm:lef}).
Furthermore, we prove:

\begin{thm}[see Theorem \ref{thm:lef}]\label{thm:intro-2}
    Suppose that the singular locus $X_{\mr{sing}}$ is of dimension less than $p$. Then,
    \begin{enumerate}
        \item $A^p_{\mr{ext}}(X)$ contains $A^p(X)$ and the BGS cycle class map extends to $A^p_{\mr{ext}}(X)$:       
        \[\mr{cl}_p:A^p_{\mr{ext}}(X) \to \mr{Gr}^W_{2p} H^{2p}(X,\mb{Q})\]
        In particular, the restriction of $\mr{cl}_p$ to $A^p(X)$ is the BGS cycle class map.
        \bigskip
        
        \item {\bf (Lefschetz $(1,1)$ for isolated singularities):} if $p=1$ and $X$ has at worst isolated singularities, 
        then $\mr{cl}_p$ maps surjectively to $H^{1,1} \mr{Gr}^W_2 H^2(X,\mb{Q})$.
    \end{enumerate}
\end{thm}

Finally, we would like to compare our findings with a beautiful result of Arapura \cite{ara-lef}.
It is a standard exercise in hyperenvelopes, to check that there is a natural map from 
the motivic cohomology $H^{2p}_M(X,\mb{Q}(p))$ to the operational Chow group $A^p(X)$. 
Arapura \cite{ara-lef} showed that the composition:
\[H^{2p}_M(X,\mb{Q}(p)) \to A^p(X) \xrightarrow{\mr{cl}_p} H^{p,p}\mr{Gr}^W_{2p}(X,\mb{Q})\]
factors through $F^pH^{2p}(X,\mb{C}) \cap H^{2p}(X,\mb{Q})$. 
Moreover if $p=1$, then the induced map from $H^{2}_M(X,\mb{Q}(1))$ to $F^1H^{2}(X,\mb{C}) \cap H^{2}(X,\mb{Q})$
is surjective. The advantage of this result is that it does not require any condition on the singularity type of $X$.
However, the example of Totaro \cite[\S $7$]{totchow} show that 
$F^1H^{2}(X,\mb{C}) \cap H^{2}(X,\mb{Q})$ is in general \emph{properly} contained in 
$H^{1,1} \mr{Gr}^W_{2}H^{2}(X,\mb{Q})$ i.e., $H^{2}(X,\mb{Q})$ has many more Hodge $(1,1)$-classes 
coming from the singular nature of the variety, than those 
lying in the image of the motivic cohomology group. 
 Theorem \ref{thm:intro-2} fills this gap, in the case of isolated singularities. 
 In particular, we show that if $X$ has at worst isolated singularities, then 
 every Hodge $(1,1)$ class in $H^2(X,\mb{Q})$ arises from an element of $A^1_{\mr{ext}}(X)$.




{\bf{Notation}:} Given a projective variety $X$, we will always denote by $A^*(X)$ the operational Chow group, with rational coefficients.
In the case, when $X$ is smooth and quasi-projective, we denote by $\mr{CH}^*(X)$ the usual Chow group, again with rational coefficients.

 {\emph{Acknowledgements}}:  The present article is motivated by a discussion with Prof. C. Voisin during the workshop ``GLEN workshop - Derived Categories, Hodge theory and singularities". We also thank Prof. M. de Cataldo for his feedback.

\section{Operational Chow group}
In this section we recall the definition and properties of the operational Chow group and the Bloch-Gillet-Soul\'{e} cycle class map.

\subsection{Operational Chow group and the cycle class map}\label{subsecOCG}
Operational Chow groups are generalizations of the classical Chow group.
To define the operational Chow group of an algebraic variety $X$, one considers a 
generalization of the resolution of singularities of $X$, known as an \emph{envelope of} $X$.
Recall, an envelope of $X$ is a proper map $\tau: U \to X$ such that for each $x \in X$, there exists a point $u \in U$
with $\tau(u)=x$ such that $k(u)=k(x)$ (see \cite{fult-rr} for details). 
For example, the envelope of an integral surface $X$
is the disjoint union of the resolution of $X$, resolution of $X_{\mr{sing}}$ (which is a curve or a union of 
points) and the singular locus of $X_{\mr{sing}}$.

Consider now smooth envelopes \[q: U \to X\, \mbox{ and } p: V \to U \times_X U\] of $X$ and the fiber product $U \times_X U$.
Denote by \[p_i: V \xrightarrow{q} U \times_X U \xrightarrow{\pr_i} U,\]
where $\pr_i$ denotes the projection onto the $i$-th coordinate for $i=1,2$.
The operational Chow group $A^*(X)$ is the kernel of the morphism (see \cite[Theorem $\mr{A.3}$]{soub} or {\cite[Theorem 2.3]{kimura}}):
\[p_1^*-p_2^*: \mr{CH}^p(U) \to \mr{CH}^p(V).\]

Denote by 
 \[H^{2p}_{\mr{Hdg}}(X):= \mr{Gr}^W_{2p} H^{2p}(X, \mb{Q}) \cap F^{p} \mr{Gr}^W_{2p} H^{2p}(X, \mb{C}). \]
Using the classical cycle class map over the non-singular algebraic varieties $U$ and $V$, Bloch-Gillet-Soul\'{e} extended this definition to 
singular varieties:

\begin{cor}{\cite[Corollary A.4]{soub}}\label{cor:cyc}
 There is a ``cycle class'' natural transformation of contravariant functors from the category of varieties over $\mb{C}$
 to the category of commutative, graded rings:
  \[ \bigoplus_{p} \mr{cl}_p : \bigoplus_{p}A^{p}(-) \to \bigoplus_{p} H^{2p}_{\mr{Hdg}}(-).\]
\end{cor}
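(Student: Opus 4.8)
The plan is to construct $\mr{cl}_p$ by descending the classical cycle class map along a smooth envelope, exploiting the fact that the target $H^{2p}_{\mr{Hdg}}(X)$ is precisely the part of the cohomology of $X$ that descent recovers from the smooth strata. First I would fix a smooth envelope $q : U \to X$ together with a smooth envelope $V \to U \times_X U$, chosen so that $U$ and $V$ are smooth and \emph{projective} (possible since $X$ is projective and envelopes may be taken projective, after resolving singularities and applying Chow's lemma). Because $U$ and $V$ are smooth projective, their cohomology carries pure Hodge structures of weight $2p$ in degree $2p$, so $\mr{Gr}^W_{2p}H^{2p}(U) = H^{2p}(U,\mb{Q})$ and $H^{2p}_{\mr{Hdg}}(U)$ is the usual group of Hodge $(p,p)$-classes, and likewise for $V$. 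Over these smooth varieties the classical cycle class map $\mr{cl}^U : \mr{CH}^p(U) \to H^{2p}(U,\mb{Q})$ is available, lands in $H^{2p}_{\mr{Hdg}}(U)$, commutes with pullback along the morphisms $p_1, p_2 : V \to U$, and is a graded ring homomorphism.

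Next, given $\alpha \in A^p(X) = \ker\big(p_1^* - p_2^* : \mr{CH}^p(U) \to \mr{CH}^p(V)\big)$, I would set $\beta := \mr{cl}^U(\alpha) \in H^{2p}_{\mr{Hdg}}(U)$. Since $p_1^*\alpha = p_2^*\alpha$ in $\mr{CH}^p(V)$ and the classical cycle class commutes with these pullbacks, one gets $(p_1^* - p_2^*)\beta = \mr{cl}^V(p_1^*\alpha - p_2^*\alpha) = 0$, so $\beta$ lies in $\ker\big(p_1^* - p_2^* : H^{2p}(U) \to H^{2p}(V)\big)$. The crux is then to identify this kernel with the target via the exact sequence
\[ 0 \to \mr{Gr}^W_{2p} H^{2p}(X,\mb{Q}) \to H^{2p}(U,\mb{Q}) \xrightarrow{\,p_1^* - p_2^*\,} H^{2p}(V,\mb{Q}), \]
and, after imposing the $(p,p)$-condition, its refinement
\[ 0 \to H^{2p}_{\mr{Hdg}}(X) \to H^{2p}_{\mr{Hdg}}(U) \xrightarrow{\,p_1^* - p_2^*\,} H^{2p}_{\mr{Hdg}}(V). \]
Granting these, $\beta$ determines a unique class $\mr{cl}_p(\alpha) \in H^{2p}_{\mr{Hdg}}(X)$ mapping to it, which defines the cycle class map. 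The passage from the first sequence to the second is formal: every arrow is a morphism of (pure) Hodge structures of weight $2p$, so a Hodge class in $H^{2p}(U)$ lies in the kernel iff its image already vanishes, whence taking $(p,p)$-parts preserves left-exactness.

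The main obstacle is establishing the first displayed sequence, i.e. cohomological descent for the envelope together with the identification of the top weight graded piece. I would view $V \rightrightarrows U \to X$ as the $1$-truncation of a smooth proper hyperenvelope of $X$ (in the sense of Gillet–Soulé and Guillén–Navarro Aznar). Proper surjectivity of $q$ forces $q^*$ to be injective on cohomology, and since $H^{2p}(U)$ is pure of weight $2p$ this injection factors through, and identifies, $\mr{Gr}^W_{2p}H^{2p}(X)$. The associated weight spectral sequence has $E_1$-terms built from the pure structures $H^b(U_a)$ and degenerates at $E_2$ for weight reasons; the heaviest weight piece $\mr{Gr}^W_{2p}H^{2p}(X)$ is then the term $E_2^{0,2p} = \ker\big(H^{2p}(U) \to H^{2p}(U\times_X U)\big)$. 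Replacing the possibly singular $U\times_X U$ by its smooth envelope $V$ — using that the top weight piece injects, $\mr{Gr}^W_{2p}H^{2p}(U\times_X U) \hookrightarrow H^{2p}(V)$, and that the two face maps compose to $p_1, p_2$ — identifies the first differential with $p_1^* - p_2^*$, giving exactly the sequence above.

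Finally I would verify the structural properties. Independence of the chosen envelopes follows by dominating any two choices by a common refinement and invoking the injectivity in the exact sequence together with naturality of $\mr{cl}^U$. Contravariant functoriality in $X$ follows by pulling back envelopes along a morphism $f : X' \to X$ and using compatibility of the classical cycle class with pullback, and the ring structure is respected because $\mr{cl}^U$ is a graded ring homomorphism while both the intersection product on $A^*(X)$ and the cup product on $H^*(X)$ are induced from the smooth level $U$. Together these yield the desired natural transformation of contravariant functors into graded commutative rings.
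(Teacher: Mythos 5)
This statement is not proved in the paper: it is quoted verbatim from Bloch--Gillet--Soul\'e \cite[Corollary A.4]{soub}, so there is no in-paper argument to compare against. Your reconstruction follows the route that the cited source actually takes --- define $\mr{cl}_p$ on $A^p(X)=\ker\bigl(p_1^*-p_2^*:\mr{CH}^p(U)\to\mr{CH}^p(V)\bigr)$ by applying the classical cycle class on the smooth projective envelope $U$ and then descending via the exact sequence $0\to \mr{Gr}^W_{2p}H^{2p}(X,\mb{Q})\to H^{2p}(U,\mb{Q})\to H^{2p}(V,\mb{Q})$ coming from the weight spectral sequence of a smooth proper hyperenvelope --- and the key steps (degeneration of the higher differentials out of $E_2^{0,2p}$ for weight reasons, left-exactness of taking $(p,p)$-rational classes via strictness of morphisms of Hodge structures, independence of the envelope by domination, functoriality and multiplicativity inherited from the smooth level) are all correctly identified. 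One sentence is wrong as literally stated: proper surjectivity of $q$ does \emph{not} force $q^*$ to be injective on $H^{2p}(X,\mb{Q})$ when $X$ is singular; since $H^{2p}(U,\mb{Q})$ is pure of weight $2p$ and morphisms of mixed Hodge structures are strict, $q^*$ necessarily kills $W_{2p-1}H^{2p}(X,\mb{Q})$, and the genuine content is that the induced map on the quotient $\mr{Gr}^W_{2p}H^{2p}(X,\mb{Q})$ is injective --- which is exactly what your spectral-sequence identification of $E_2^{0,2p}$ in the next sentence delivers, so the slip does not propagate into the argument. With that sentence corrected, the proposal is a faithful account of why the cited corollary holds.
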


\begin{defi}
For $Y$ a projective variety, the cycle class map mentioned in Corollary \ref{cor:cyc}: 
\[\mr{cl}_p: A^p(Y) \to  H^{2p}_{\mr{Hdg}}(Y,\mb{Q}).\]
will be called the \emph{Bloch-Gillet-Soul\'{e}} cycle class map. It coincides with the usual cycle class map if $Y$ is smooth. 
\end{defi}

\subsection{Application to resolution of singularities}
Let $Y$ be a quasi-projective variety (possibly singular),
of dimension say $n$. Consider a non-singular hyperenvelope of a 
compactification of $Y$ (see \cite[\S $1.4.1$]{soug}
for the definition and basic properties of hyperenvelopes). 
The hyperenvelope gives rise to a 
cochain complex of motives (see \cite[\S $2.1$]{soug}). 
For any positive integer $p$, one 
can then obtain an abelian group $R^i\mr{CH}^p(Y)$
arising as the $i$-th cohomology group after applying the functor 
$\mr{CH}^p(-)$ to the cochain complex of motives (see 
\cite[\S $3.1.4$]{soug}). Observe that $R^i\mr{CH}^p(Y)$
does not depend on the choice of the compactification or the 
hyperenvelope. We recall the following useful result:

\begin{thm}\label{th:chow}
 Fix a positive integer $p$.
 Then, the following holds true for $R^0\mr{CH}^p(Y)$:
 \begin{enumerate}
  \item if $Y$ is projective, then $R^0\mr{CH}^p(Y)$ is 
   the operational Chow group $A^p(Y)$ defined by 
  Fulton and MacPherson (see \cite[Chapter $17$]{fult}),
  \item if $Y$ is non-singular (but not necessarily projective),
  then $A^p(Y)$ is the free abelian group generated by the codimension 
  $p$ subvarieties in $Y$, upto rational equivalence,
  \item if $Y$ is non-singular and $\ov{Y}$ is a compactification 
  of $Y$ with boundary $Z:=\ov{Y} \backslash Y$, we then 
  have the exact sequence:
  \begin{equation}\label{eq:loc}
   0 \to R^0\mr{CH}^p(Y) \to R^0\mr{CH}^p(\ov{Y}) \to 
   R^0 \mr{CH}^p(Z) \to R^1\mr{CH}^p(Y) \to ..
  \end{equation}
  \item if $Y$ is the union of two proper closed subvarieties 
  $Y_1$ and $Y_2$, then we have the exact sequence:
  \begin{equation}\label{eq:may}
   0 \to R^0\mr{CH}^p(Y) \to R^0\mr{CH}^p(Y_1) \oplus R^0\mr{CH}^p(Y_2)
   \to R^0\mr{CH}^p(Y_1 \cap Y_2).
  \end{equation}
 \end{enumerate}
\end{thm}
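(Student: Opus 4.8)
The plan is to treat all four statements as consequences of the descent formalism for Chow groups via non-singular hyperenvelopes from \cite{soug}, combined with the kernel description of the operational Chow group recalled in \S\ref{subsecOCG} and Fulton's Poincar\'e duality for smooth varieties \cite{fult}. In each case I would fix a compactification $\ov{Y}$ and a non-singular hyperenvelope over it, so that $R^\bullet\mr{CH}^p(Y)$ is computed as the cohomology of the cochain complex obtained by applying $\mr{CH}^p(-)$ degreewise. Since this complex is concentrated in non-negative cohomological degrees, one has $R^i\mr{CH}^p(-) = 0$ for $i < 0$; this vanishing is the source of the left-exactness (the leading zeros) in (3) and (4).

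For (1), I would compare the complex coming from a general hyperenvelope with the two-term complex attached to a pair of smooth envelopes $q\colon U \to Y$ and $p\colon V \to U \times_Y U$. As $R^0$ is the kernel of the first differential and is independent of the chosen hyperenvelope, it equals $\ker(p_1^* - p_2^*\colon \mr{CH}^p(U) \to \mr{CH}^p(V))$, which is precisely the Fulton--MacPherson group $A^p(Y)$ by the description recalled in \S\ref{subsecOCG} (cf. \cite[Theorem 2.3]{kimura}, \cite[Theorem A.3]{soub}). For (2), when $Y$ is smooth I would choose a smooth compactification, so that $\ov{Y}$ provides a trivial hyperenvelope and the cochain complex reduces, via localization, to the ordinary Chow group of the open part; hence $R^0\mr{CH}^p(Y) = \mr{CH}^p(Y)$, and Fulton's duality isomorphism $A^p(Y) \cong \mr{CH}_{\dim Y - p}(Y)$ \cite[Chapter 17]{fult} identifies it with the group of codimension-$p$ cycles modulo rational equivalence.

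For (3) and (4), I would obtain the sequences as the degree-$0$ part of the long exact cohomology sequence attached to a distinguished triangle of complexes of motives. For (4), a hyperenvelope of $Y = Y_1 \cup Y_2$ chosen to restrict to hyperenvelopes of $Y_1$, $Y_2$ and $Y_1 \cap Y_2$ yields the Mayer--Vietoris short exact sequence of $\mr{CH}^p$-complexes; its long exact cohomology sequence, truncated using $R^{-1}\mr{CH}^p(Y_1 \cap Y_2) = 0$, gives exactly
\[ 0 \to R^0\mr{CH}^p(Y) \to R^0\mr{CH}^p(Y_1) \oplus R^0\mr{CH}^p(Y_2) \to R^0\mr{CH}^p(Y_1 \cap Y_2). \]
For (3), I would instead use the localization triangle relating the open $Y$, its compactification $\ov{Y}$, and the boundary $Z$, compatible with a hyperenvelope of the pair $(\ov{Y}, Z)$; the same truncation, using $R^{-1}\mr{CH}^p(Z) = 0$, produces \eqref{eq:loc}.

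The main obstacle, I expect, will be the careful construction of these triangles inside the hyperenvelope framework: namely, arranging a single non-singular hyperenvelope of $\ov{Y}$ that simultaneously restricts to hyperenvelopes of $Z$ (respectively $Y_1, Y_2, Y_1 \cap Y_2$), so that the degreewise $\mr{CH}^p$-groups genuinely assemble into a short exact sequence of cochain complexes. Once this compatibility and the independence of $R^\bullet\mr{CH}^p$ from the choice of hyperenvelope are granted (both established in \cite{soug}), the remaining content is the routine homological algebra of extracting the $R^0$-terms from the associated long exact sequences.
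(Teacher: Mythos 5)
The paper does not really prove this theorem: each item is a bare citation ((1) to Gillet--Soul\'e's Proposition 4, (2) to Fulton's Proposition 17.3.1 and Corollary 17.4, and (3),(4) to Theorem 2(iii),(iv) and \S 3.1.1 of \cite{soug}). Your proposal reconstructs, in outline, the arguments behind exactly those citations --- the identification of $R^0$ with $\ker(p_1^*-p_2^*)$ for a pair of smooth envelopes, Fulton's Poincar\'e duality in the smooth case, and localization/Mayer--Vietoris triangles of complexes of motives truncated using vanishing in negative cohomological degrees --- so it is the same approach carried one level deeper. The compatibility issues you flag at the end (a hyperenvelope of $\ov{Y}$ restricting to hyperenvelopes of $Z$, resp.\ of $Y_1$, $Y_2$, $Y_1\cap Y_2$, plus independence of all choices) are precisely the content of \cite{soug} that the paper takes as given, so deferring them is consistent with what the authors themselves do.

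One intermediate claim in your treatment of (2) is wrong and worth correcting, because it clashes with how the theorem is used later in the paper. For a smooth but non-projective $Y$ with smooth compactification $\ov{Y}$ and boundary $Z$, the group $R^0\mr{CH}^p(Y)$ is \emph{not} $\mr{CH}^p(Y)$: by item (3) it is the subgroup $\ker\bigl(\mr{CH}^p(\ov{Y})\to R^0\mr{CH}^p(Z)\bigr)$ of $\mr{CH}^p(\ov{Y})$ (the paper later names it $A^p_c(Y)$, the compactly supported group), whereas the ordinary localization sequence exhibits $\mr{CH}^p(Y)$ as a \emph{quotient} of $\mr{CH}^p(\ov{Y})$, not a subgroup. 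Already for $Y=\mb{A}^1$, $\ov{Y}=\mb{P}^1$, $p=1$, the group $R^0\mr{CH}^1(\mb{A}^1)$ has rank $1$ while $\mr{CH}^1(\mb{A}^1)=0$. This does not sink item (2) as stated, since that item concerns the Fulton--MacPherson group $A^p(Y)$ and your appeal to the duality $A^p(Y)\cong\mr{CH}_{\dim Y-p}(Y)$ is the correct (and the cited) argument; but the erroneous identification would, if carried into item (3), turn \eqref{eq:loc} into a false statement about ordinary Chow groups, so it should be deleted.
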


\begin{proof}
 \begin{enumerate}
  \item This is \cite[Proposition $4$]{soug}.
  \item This is \cite[Proposition $17.3.1$ and 
  Corollary $17.4$]{fult}.
  \item This is \cite[Theorem $2(iii)$ and \S $3.1.1$]{soug}.
  \item This is \cite[Theorem $2(iv)$ and \S $3.1.1$]{soug}.
 \end{enumerate}
\end{proof}

 \begin{cor}\label{cor:ch}
 Let $X$ be an integral, projective variety  and \[\phi: \wt{X} \to X\] be a resolution of singularities and 
 $E$ be the exceptional locus. 
 Then, we have the exact sequence:
 \begin{equation}\label{eq:ch3}
   0 \to   A^p(X) \to A^p(\wt{X}) \oplus A^p(X_{\mr{sing}}) \xrightarrow{\phi^*-i^*} A^p(E).
 \end{equation}
 where $X_{\mr{sing}}$ denotes the singular locus of $Y$ and $i: E \hookrightarrow \wt{X}$ is the inclusion.
 \end{cor}

  \begin{proof}
  Denote by $X_{\mr{sm}}= X \backslash X_{\mr{sing}}$ the smooth locus in $X$. 
  Denote by 
 $A^p_c(Y):=R^0\mr{CH}^p(Y)$.
   By Theorem \ref{th:chow}, we have the following diagram of exact sequences:
   \[\begin{diagram}
       0&\rTo&A^p_c(X_{\mr{sm}})&\rTo^{j_*}&A^p(X)&\rTo&A^p(X_{\mr{sing}})&\rTo^{\partial}&R^1\mr{CH}^p(X_{\mr{sm}})\\
       & &\dTo^{\mr{id}}&\circlearrowleft&\dTo^{\phi^*}&\circlearrowleft&\dTo^{\phi^*}&\circlearrowleft&\dTo^{\mr{id}}\\
       0&\rTo&A^p_c(X_{\mr{sm}})&\rTo&A^p(\wt{X})&\rTo^{i^*}&A^p(E)&\rTo^{\partial}&R^1\mr{CH}^p(X_{\mr{sm}})
   \end{diagram}\]
   This induces the sequence \eqref{eq:ch3}. It remains to 
    check exactness. For injectivity of the first arrow in \eqref{eq:ch3}, observe using the above diagram
    that $\alpha \in A^p(X)$ maps to zero in $A^p(\wt{X}) \oplus A^p(X_{\mr{sing}})$ if and only if 
    there exists $\beta \in A^p_c(X_{\mr{sm}})$ such that $j_*(\beta)=\alpha$ and $\beta$ 
    goes to $0$ in $A^p(\wt{X})$. But, the injectivity 
    of the map from $A^p_c(X_{\mr{sm}})$ to $A^p(\wt{X})$ implies that $\beta$ must be $0$. Therefore, $\alpha=0$. This implies injectivity of the 
    first arrow in \eqref{eq:ch3}. 
    
    We now check the exactness in the middle in \eqref{eq:ch3}.
    Given $\gamma \in A^p(\wt{X})$ and $\beta \in A^p(X_{\mr{sing}})$, 
    if $i^*(\gamma)=\phi^*(\beta)$, then $\partial(i^*(\gamma))=0$. Since the last vertical arrow is 
    just the identity map, this implies $\partial(\beta)=0$. Hence, there exists $\alpha \in A^p(X)$ 
    mapping to $\beta$. This implies $i^*(\gamma-\phi^*(\alpha))=0$.
    By the exactness of the bottom row, there exists $\alpha' \in A^p_c(X_{\mr{sm}})$ 
    mapping to $\gamma-\phi^*(\alpha)$. Taking
    \[\alpha_0:=\alpha+j_*(\alpha'),\, \mbox{ we have } \phi^*(\alpha_0)=\phi^*(\alpha)+\phi^*(j_*(\alpha'))=\phi^*(\alpha)+
    (\gamma-\phi^*(\alpha))=\gamma.\]
    Moreover, the exactness of the top row implies that the image of $\alpha_0$ in $A^p(X_{\mr{sing}})$
    is the same as that of $\alpha$, which is equal to $\beta$.
    In other words, $\alpha_0 \in A^p(X)$ maps to $\gamma \oplus \beta \in A^p(\wt{X}) \oplus A^p(X_{\mr{sing}})$.
    This proves the exactness in the middle and hence the corollary.
  \end{proof}

Combining with the cohomology long exact sequence associated to a resolution, we get the following diagram of exact sequences:

\begin{cor}\label{cor:diag}
Let $X, \wt{X}$ and $E$ be as in Corollary \ref{cor:ch}. If $\dim(X_{\mr{sing}})<p$, then 
we have the following diagram of exact sequences:
  \begin{equation}\label{eq:cyc}
      \begin{diagram}
          0&\rTo&A^p(X)&\rTo^{\phi^*}&A^p(\wt{X})&\rTo^{i_E^*}&A^p(E)\\
          & &\dTo^{\mr{cl}_p}&\circlearrowleft&\dTo^{\mr{cl}_p}&\circlearrowleft&\dTo^{\mr{cl}_p}\\
          0&\rTo&H^{2p}_{\mr{Hdg}}(X)&\rTo^{\phi^*}&H^{2p}_{\mr{Hdg}}(\wt{X})&\rTo^{i_E^*}&H^{2p}_{\mr{Hdg}}(E)
      \end{diagram}
  \end{equation}
\end{cor}

\begin{proof}
Since $p>\dim(X_{\mr{sing}})$, we have $A^p(X_{\mr{sing}})=0$ and $H^{2p}(X_{\mr{sing}},\mb{Q})=0$. 
    The corollary then follows immediately from Corollary \ref{cor:ch} along with the functoriality of the Bloch-Gillet-Soul\'{e} cycle class map (Corollary \ref{cor:cyc}).
\end{proof}

 \section{Lefschetz $(1,1)$-theorem for singular varieties}\label{section: lefshetz one,one}
 Let $X$ be a singular, projective variety. 
 In this section we introduce the notion of \emph{extended operational Chow group $A^p_{\mr{ext}}(X)$} 
 which arises as an inverse limit over
 all resolutions $\wt{X}$ of a certain subset of $A^p(\wt{X})$ containing $A^p(X)$ (see Definition \ref{defi:ext}).  
 We show that this is the largest group of cycles which has a morphism to the cohomology group of $X$. 
 We prove that the Bloch-Gillet-Soul\'{e} cycle class map extend to $A^p_{\mr{ext}}(X)$ and in the case $p=1$, the extended 
 cycle class map is surjective (see Theorem \ref{thm:lef}).

 \subsection{Extended Operational Chow group}
Let $X$ be an irreducible projective variety and $Z \subset X$ be the singular locus. By a \emph{resolution of singularities of} $X$, we mean a
proper, surjective morphism \[\phi:\wt{X}_{\phi} \to X\] such that $\wt{X}_{\phi}$ is smooth and $\phi$ is an isomorphism over $X \backslash Z$.
The \emph{exceptional locus} $E_{\phi} \subset \wt{X}_{\phi}$ \emph{associated to} $\phi$, is the preimage of $Z$ i.e., $E_{\phi}:=\phi^{-1}(Z)$
with the reduced scheme structure.
Denote by $i_{\phi}:E_{\phi} \hookrightarrow \wt{X}_{\phi}$ the natural inclusion and 
\[A^p_{\mr{ext}}(\phi)=\left\{\left. \alpha \in A^p(\wt{X}_{\phi})\, \right\vert\, i_{\phi}^*\left(\mr{cl}_p(\alpha)\right) = 0\,\right\},\]
where $\mr{cl}_p:A^p(\wt{X}_{\phi}) \to H^{2p}(\wt{X}_{\phi},\mb{Q})$ is the cycle class map.

\begin{lem}\label{lem:ext}
    Given a morphism of resolution of singularities of $X$:
    \[\phi':\wt{X}_{\phi'} \to X,\, \phi:\wt{X}_{\phi} \to X\, \mbox{ and } \psi_{\phi',\phi}:\wt{X}_{\phi'} \to \wt{X}_{\phi}\, \mbox{satisfying}\, \phi'=\phi \circ \psi_{\phi',\phi}\]
    the pullback induces a group homomorphism:
    \[\psi^*_{\phi',\phi}:A^p_{\mr{ext}}(\phi) \to A^p_{\mr{ext}}(\phi').\]
\end{lem}

\begin{proof}
For any $\alpha \in A^p_{\mr{ext}}(\phi)$, we know that $\psi^*_{\phi',\phi}(\alpha) \in A^p(\wt{X}_{\phi'})$. We need to prove that, $\psi^*_{\phi',\phi}(\alpha) \in A^p_{\mr{ext}}(\wt{X}_{\phi'})$. 
Using Corollary \ref{cor:diag},  there exists $\alpha_X \in H^{2p}_{\mr{Hdg}}(X)$
such that $\mr{cl}_p(\alpha)=\phi^*(\alpha_X)$ (use the exactness of the bottom row in \eqref{eq:cyc}).
Then, 
\[\mr{cl}_p(\psi^*_{\phi',\phi}(\alpha))=\psi^*_{\phi',\phi}\left(\mr{cl}_p(\alpha)\right)=\psi^*_{\phi',\phi}\left(\phi^*(\alpha_X)\right)=(\phi')^*(\alpha_X).\]
Using Corollary \ref{cor:diag} once again (with the resolution $\phi'$), we conclude that 
\[i_{\phi'}^*\mr{cl}_p(\psi^*_{\phi',\phi}\left(\alpha\right))=i_{\phi'}^*(\phi')^*(\alpha_X)=0,\, \mbox{ where } i_{\phi'}:E_{\phi'} \hookrightarrow \wt{X}_{\phi'}\, \mbox{ is the associated 
exceptional locus}.\]
Hence, $\psi^*_{\phi',\phi}(\alpha) \in A^p_{\mr{ext}}(\phi')$. This proves the lemma.
\end{proof}

 \begin{defi}\label{defi:ext}
 Denote by $\mr{Res}(X)$ the category of resolutions of $X$. 
   By Lemma \ref{lem:ext}, we have a natural functor:
   \[\mc{A}^p_{\mr{ext}}(X):\mr{Res}(X)^{\mr{op}}\, \to \, \mr{Ab}, \, \mbox{ sending } \left(\phi:\wt{X}_{\phi} \to X\right) \mbox{ to } 
   A^p_{\mr{ext}}(\phi).\]
   Note that, $\mr{Res}(X)$ is essentially small and the associated skeleton category $\mr{Res}^{\mr{sk}}(X)$ is small.
   Since the category of abelian groups $\mr{Ab}$ is complete, the inverse limits of $A^p_{\mr{ext}}(\phi)$ exists as $\phi$
   varies over all resolution $\phi \in \left(\mr{Res}^{\mr{sk}}(X)\right)^{\mr{op}}$. As 
   $\mr{Res}^{\mr{sk}}(X)$ is a final subcategory of $\mr{Res}(X)$, 
   \cite[\href{https://stacks.math.columbia.edu/tag/09WN}{Tag 09WN, Lemma $4.17.4$}]{stacks-project}
   implies that
   \[\varprojlim_{\phi \in \mr{Res}^{\mr{sk}}(X)^{\mr{op}}} A^p_{\mr{ext}}(\phi) \cong \varprojlim_{\phi \in \mr{Res}(X)^{\mr{op}}} A^p_{\mr{ext}}(\phi) = \]\[=
   \left\{\left. \left(a_{\phi}\right)_{\phi \in \mr{Res}(X)}\, \right\vert \, a_{\phi} \in A^p_{\mr{ext}}(\phi)\, \mbox{ and } 
   \psi_{\phi',\phi}^*\left(a_{\phi}\right)=a_{\phi'},\, \left(\psi_{\phi',\phi}:\wt{X}_{\phi'} \to \wt{X}_{\phi}\right) \in \mf{M}or\left(\mr{Res}(X)\right)\right\}.\]
   This inverse limit will be called \emph{the extended operational Chow group}, denoted by $A^p_{\mr{ext}}(X)$.
   For consistency, if $X$ is non-singular, we set $A^p_{\mr{ext}}(X):=A^p(X)$ (by our definition of resolution, in this 
   case, the only resolution of $X$ is itself).
 \end{defi}

\begin{rem}
    The difference between the extended operational Chow group and the usual operational Chow group, is that 
    the operation Chow group looks at one hyperenvelope of $X$ which contains many more pieces than the resolution of $X$
    (see \S \ref{subsecOCG}).
    However, the universal property of hyperenvelopes guarantee that the operational Chow group does not depend on the 
    choice of the hyperenvelope. In comparison, for the extended operational Chow group, we take a limit over all resolutions $\wt{X}$ of $X$, the largest subspace in 
    $A^p(\wt{X})$ that contributes to Hodge classes on $X$. This naturally contains the operational Chow group, shown below.
\end{rem}

\subsection{Lefschetz $(1,1)$ and other properties}

In the following theorem we show that the extended operational Chow group contains the usual Chow group and is indeed the largest group with a morphism to the cohomology group of $X$, thus making it the natural candidate for the Lefshetz 
$(1,1)$ property in this setup. We show that indeed the extended cycle class map from the extended 
operational Chow group maps surjectively to the space of $(1,1)$-classes, thereby giving us a Lefschetz $(1,1)$-theorem 
for singular varieties.

 \begin{thm}\label{thm:lef}
 Let $X$ be an irreducible projective variety. Fix an integer $p>0$ such that $\dim(X_{\mr{sing}})<p$. Then, 
 \begin{enumerate}
     \item  the operation Chow group $A^p(X)$ is contained in $A^p_{\mr{ext}}(X)$,
     \item  there is a natural cycle class map
     \begin{equation}\label{eq:cyc-ext}
         \mr{cl}_p:A^p_{\mr{ext}}(X) \to H^{2p}_{\mr{Hdg}}(X) 
     \end{equation}
     which coincides with the Bloch-Gillet-Soul\'{e} cycle class map when restricted to the operational Chow group $A^p(X)$,
     \item ({\bf{Functoriality for $p=1$}):} let $f:Y \to X$ be a morphism of irreducible projective varieties
     such that $f^{-1}(X_{\mr{sm}}) \subset Y_{\mr{sm}}$. Suppose that $X$ and $Y$ has at worst isolated singularities. Then, 
      there is a group homomorphism induced by pullback:
     \[f^*:A_{\mr{ext}}^1(X) \to A_{\mr{ext}}^1(Y). \]
     \item ({\bf{Lefschetz $(1,1)$-theorem}):} If $X$ has at worst isolated singularities, then 
     \[\mr{cl}_1:A^1_{\mr{ext}}(X) \to H^2_{\mr{Hdg}}(X)\, \mbox{ is surjective}.\]
 \end{enumerate}
 \end{thm}

 \begin{proof}
 Let $\phi:\wt{X}_{\phi} \to X$ be a resolution of $X$ and $E$ be the exceptional divisor. Using Corollary \ref{cor:diag},
 we know that $A^p(X)$ is contained in $A^p_{\mr{ext}}(\phi)$ and the cycle class map 
 \[\mr{cl}_p:A^p_{\mr{ext}}(\phi) \hookrightarrow A^p(\wt{X}_{\phi}) \to H^{2p}_{\mr{Hdg}}(\wt{X}_{\phi})\, \mbox{ factors through } 
 H^{2p}_{\mr{Hdg}}(X).\] Given another resolution $\phi':\wt{X}_{\phi'} \to X$ and a morphism (over $X$), $\psi_{\phi',\phi}:\wt{X}_{\phi'} \to \wt{X}_{\phi}$,
  $\psi^*_{\phi',\phi}$ maps the image of $A^p(X)$ in $A^p_{\mr{ext}}(\phi)$ to that in $A^p_{\mr{ext}}(\phi')$
 (see Lemma \ref{lem:ext}) 
 and we have a commutative diagram:
 \[\begin{diagram}
     A^p_{\mr{ext}}(\phi)&\rTo^{\psi^*_{\phi',\phi}}&A^p_{\mr{ext}}(\phi')\\
     &\rdTo_{\mr{cl}_p} &\dTo_{\mr{cl}_p}\\
   & &  H^{2p}_{\mr{Hdg}}(X)
 \end{diagram}\]
  Taking inverse limit, we get a 
 map from $A^p(X)$ to $A^p_{\mr{ext}}(X)$ and a cycle class map 
 (see  \cite[\href{https://stacks.math.columbia.edu/tag/09WN}{Tag 002D, Lemma $4.14.9$}]{stacks-project})
 $\mr{cl}_p:A^p_{\mr{ext}}(X) \to H^{2p}_{\mr{Hdg}}(X)$. 
 Furthermore, since inverse limit is left exact, the map from $A^p(X)$ to $A^p_{\mr{ext}}(X)$ is injective. 
 By contruction, the restriction of $\mr{cl}_p$ to the image of $A^p(X)$ is the same as the Bloch-Gillet-Soul\'{e} cycle 
 class map.  This proves $(1)$ and $(2)$.

  To prove $(3)$,  consider a morphism of resolutions of $X$
  \[\phi:\wt{X}_{\phi} \to X, \phi':\wt{X}_{\phi'} \to X\, \mbox{ and } \psi_{\phi',\phi}:\wt{X}_{\phi'} \to \wt{X}_{\phi}\, \mbox{ satisfying } \phi'=\phi \circ \psi_{\phi',\phi}.\]
  We claim that, $\psi^*_{\phi',\phi}:A^1_{\mr{ext}}(\phi) \to A^1_{\mr{ext}}(\phi')$ is an isomorphism. Indeed, we have the following 
  commutative diagram of short exact sequences (see Corollary \ref{cor:diag}): 
  \begin{equation}\label{eq:pic0}
      \begin{diagram}
      0&\rTo&\mr{Pic}^0(\wt{X}_{\phi})&\rTo&A^1_{\mr{ext}}(\phi)&\rTo^{\mr{cl}_1}&\phi^*\left(H^2_{\mr{Hdg}}(X)\right)&\rTo&0\\
 & & \dTo^{\psi^*_{\phi',\phi}}_{\cong}&\circlearrowleft &\dTo^{\psi^*_{\phi',\phi}}&\circlearrowleft&\dTo^{\psi^*_{\phi',\phi}}_{\cong}\\
         0&\rTo&\mr{Pic}^0(\wt{X}_{\phi'})&\rTo&A^1_{\mr{ext}}(\phi')&\rTo^{\mr{cl}_1}&(\phi')^*\left(H^2_{\mr{Hdg}}(X)\right)&\rTo&0
  \end{diagram}
  \end{equation}
  where the last vertical arrow is an isomorphism because $\phi^*$ and $\left(\phi'\right)^*$ are injective and the first vertical arrow is 
  an isomorphism since $\Pic^0(-)$ is a birational invariant. This implies that the middle vertical arrow is an isomorphism.
  This proves the claim. Moreover, since this isomorphism commutes with composition, we conclude that for each resolution 
  $\phi$ of $X$, the natural morphism 
  \begin{equation}\label{eq:inv-1}
      p_{\phi}: A^1_{\mr{ext}}(X) \to A^1_{\mr{ext}}(\phi)\, \mbox{ is an isomorphism}.
  \end{equation}

  Consider a resolution $\phi:\wt{X}_{\phi} \to X$ and the fiber product $W:=\wt{X}_{\phi} \times_X Y$. 
     Since the preimage of $X_{\mr{sm}}$ is      $Y_{\mr{sm}}$, there exists an irreducible component, say $Y_1$ of $W$
     such that the induced map from 
     $Y_1$ to $Y$ is an isomorphism over $Y_{\mr{sm}}$. Denote by $\wt{Y}$ a resolution of singularities of $Y_1$. Then, $\wt{Y}$ is also 
     a resolution of $Y$. We then have the following commutative diagram:
     \begin{equation}
     \begin{diagram}
    \wt{Y}&\rTo^g &\wt{X}_{\phi}\\
    \dTo^{\phi_Y}&\circlearrowleft&\dTo_{\phi}\\
         Y&\rTo^f &X
     \end{diagram}    
     \end{equation}
     We first observe that the morphism $g$ induces a morphism:
     \[g^*:A^1_{\mr{ext}}(\phi) \to A^1_{\mr{ext}}(\phi_Y).\]
     Indeed, for any $\alpha \in A^p_{\mr{ext}}(\phi)$, we have $i_{\phi}^*(\alpha)=0$, where $i_{\phi}:E_{\phi} \hookrightarrow \wt{X}_{\phi}$
     is the associated exceptional divisor.
    Corollary \ref{cor:diag} implies that there exists $\alpha_X \in H^{2}_{\mr{Hdg}}(X)$ such that $\mr{cl}_1(\alpha)=\phi^*(\alpha_X)$.
     Pulling back these classes to $\wt{Y}$, we get:
     \[\mr{cl}_1(g^*\alpha)=g^*\mr{cl}_1(\alpha)=g^*\phi^*(\alpha_X)=\phi_Y^*f^*(\alpha_X).\]
     Using \eqref{eq:cyc} again, after replacing the resolution of $X$ by the resolution $\phi_Y$ of $Y$, this implies
     $i^*\mr{cl}_p(g^*\alpha)=i^*\phi_Y^*f^*(\alpha_X)=0$, where $i:E \hookrightarrow \wt{Y}$ is the exceptional divisor.
     Hence, $g^*\alpha \in A^1_{\mr{ext}}(\phi_Y)$. 
     This proves our claim that $g^*$ sends $A^1_{\mr{ext}}(\phi)$ to $A^1_{\mr{ext}}(\phi_Y)$.
     
   Using the universal property of inverse limit, it suffices to show that for every resolution 
   $\rho \in \mr{Res}(Y)$, we have a morphism of abelian groups 
   \begin{equation}\label{eq:inv-2}
       q_{\rho}:A^1_{\mr{ext}}(X) \to A^1_{\mr{ext}}(\rho)\, \mbox{ compatible with compositions}.
   \end{equation}
  Note that, for any resolution $\rho:\wt{Y}_{\rho} \to Y$ of $Y$, there exists a resolution $W$ of $Y$ (which is a
  resolution of an irreducible component of the fiber product $\wt{Y} \times_Y \wt{Y}_{\rho}$)
  which dominates both $\wt{Y}_{\rho}$ and $\wt{Y}$. Denote by 
  \[h: W \to \wt{Y}\, \mbox{ and } h_{\rho}:W \to \wt{Y}_{\rho}\, \mbox{ the natural maps}.\]
  Consider the composition:
  \begin{equation}\label{eq:comp-1}
     A^1_{\mr{ext}}(X) \xrightarrow[\sim]{p_{\phi}} A^1_{\mr{ext}}(\phi) \xrightarrow{g^*} A^1_{\mr{ext}}(\phi_Y) \xrightarrow[\sim]{h^*}
  A^1_{\mr{ext}}(\phi_Y \circ h)=A^1_{\mr{ext}}(\rho \circ h_{\rho}) \xrightarrow[\sim]{(h_{\rho}^*)^{-1}} A^1_{\mr{ext}}(\rho) 
  \end{equation}
 where the first isomorphism is \eqref{eq:inv-1} and all other isomorphisms follow from the claim shown above (after replacing the 
 resolutions of $X$ by that of $Y$). Since $\mr{Res}(X)$ and $\mr{Res}(Y)$ are cofiltered categories  (cofiltered 
 by taking resolutions of fiber products) and all the morphisms in \eqref{eq:comp-1} commute with composition, 
 we conclude that \eqref{eq:comp-1} does  not depend on the resolutions $\wt{X}, \wt{Y}$ and $W$. 
 This gives us the required morphism \eqref{eq:inv-2}.
 Using the universal property of inverse limit, we get the morphism $f^*$ from $A_{\mr{ext}}^1(X)$ to $A_{\mr{ext}}^1(Y)$.
This proves $(3)$. 

To prove $(4)$, observe using the diagram \eqref{eq:cyc} that for every resolution $\phi:\wt{X}_{\phi} \to X$, the extended cycle class map 
\eqref{eq:cyc-ext} is surjective in the case when $p=1$.
Indeed, by the Lefschetz $(1,1)$-theorem for every $\alpha \in H^2_{\mr{Hdg}}(X)$, $\phi^*(\alpha) \in H^2_{\mr{Hdg}}(\wt{X}_{\phi})$ is the first Chern class 
of a line bundle. In particular, there exists  $\wt{\alpha}_{\mr{alg}} \in A^1(\wt{X}_{\phi})$ such that 
$\mr{cl}_1(\wt{\alpha}_{\mr{alg}})=\phi^*(\alpha)$. Using the exactness of the bottom row of \eqref{eq:cyc}, we conclude that
$\mr{cl}_1\left(\wt{\alpha}_{\mr{alg}}\right)=\alpha$, where $\mr{cl}_1$ is as in \eqref{eq:cyc-ext}.
This proves the surjectivity of 
$\mr{cl}_1$. Then, the isomorphism \eqref{eq:inv-1} implies that taking the limit over all the resolutions of $X$, the induced cycle class map from 
$A^1_{\mr{ext}}(X)$ to $H^2_{\mr{Hdg}}(X)$ is surjective. This proves $(4)$ and hence the theorem. 
 \end{proof}

\section{When is the B-G-S cycle class map surjective?}

\subsection{Surjectivity of the Bloch-Gillet-Soul\'{e} cycle class map}
 Let $X$ be a projective variety with at worst isolated singularities. 
    Let $\phi: \wt{X} \to X$ be a resolution of singularities  with exceptional divisor $E$ such that every irreducible component of $E$
    is non-singular. Such a resolution exists due to Hironaka. 
    Denote by $E_1,...,E_m$ the irreducible components of $E$.

\begin{thm}\label{thm:rat}
   Notations as above. 
  If the natural map 
  \begin{equation}\label{eq:surj}
      H^1(\mo_{\wt{X}}) \to \bigoplus_{i=1}^m H^1(\mo_{E_i})\, \mbox{ is surjective,}
  \end{equation}
        then the Bloch-Gillet-Soul\'{e} cycle class map \[\mr{cl}_1:A^1(X) \to H^2_{\mr{Hdg}}(X)\, \mbox{ is surjective}.\] 
        Additionally, if $H^1(\mo_{E_i})=0$ for all $1 \le i \le m$, then $A^1_{\mr{ext}}(X)=A^1(X)$.     
\end{thm}

\begin{proof}
Combining Corollary \ref{cor:diag} along with the Mayer-Vietoris sequence,
we have the following diagram of exact sequences induced by pullbacks:
\begin{equation}\label{eq:chow-coh}
    \begin{diagram}
    0&\rTo&A^1(X)&\rTo^{\phi^*}&A^1(\wt{X})&\rTo&\bigoplus_{i=1}^m A^1(E_i)\\
    & &\dTo^{\mr{cl}_X}&\circlearrowleft&\dTo^{\mr{cl}_{\wt{X}}}&\circlearrowleft&\dTo_{\mr{cl}_E}\\
    0&\rTo&H^2_{\mr{Hdg}}(X)&\rTo^{\phi^*}&H^2_{\mr{Hdg}}(\wt{X})&\rTo&\bigoplus_{i=1}^m H^2_{\mr{Hdg}}(E_i)
\end{diagram}
\end{equation}
Since $\wt{X}$ and $E_i$ is non-singular for all $1 \le i \le m$, Lefschetz $(1,1)$-theorem implies that 
the second and the third vertical arrows are surjective with kernels $\mr{Pic}^0(\wt{X})$ and $\oplus \mr{Pic}^0(E_i)$, respectively.
Since $\mr{Pic}^0(-)$ is a quotient of $H^1(\mo_{(-)})$, the surjectivity condition \eqref{eq:surj}
implies that the natural pullback morphism from $\mr{Pic}^0(\wt{X})$ to $\oplus_{i=1}^m \mr{Pic}^0(E_i)$ is surjective. 
By Snake lemma applied to the above diagram, this implies  that the first vertical arrow $\mr{cl}_X$ is surjective. Moreover, 
if $H^1(\mo_{E_i})=0$, then $\mr{Pic}^0(E_i)=0$ for all $1 \le i \le m$. Using Snake lemma once again, we conclude that 
$A^1(X) \cong A^1_{\mr{ext}}(\phi)$. Since $A^1_{\mr{ext}}(X) \cong A^1_{\mr{ext}}(\phi)$ as observed in the proof of Theorem \ref{thm:lef},
this implies that $A^1(X)=A^1_{\mr{ext}}(X)$. This proves the theorem.    
\end{proof}

\begin{cor}\label{cor:rat}
    Let $X$ be a normal, projective surface with at worst rational singularities. Then, 
    $A^1(X)=A^1_{\mr{ext}}(X)$ and the Bloch-Gillet-Soul\'{e} cycle class map from $A^1(X)$ to $H^2_{\mr{Hdg}}(X)$ is surjective.
\end{cor}

\begin{proof}
 If $X$ is a projective surface with isolated rational singularities, there exists a resolution 
 \[\phi:\wt{X} \to X\] such that the exceptional divisor $E$ is the union of non-singular rational curve. 
 Denote by $E_1,...,E_m$ be the irreducible components of $E$. 
 This implies that $H^1(\mo_{E_i})=0$ for all $1 \le i \le m$.
 Then, Theorem \ref{thm:lef} implies that $A^1(X)=A^1_{\mr{ext}}(X)$ and the Bloch-Gillet-Soul\'{e} cycle class map $\mr{cl}_1$ is surjective.
    This proves the corollary.
\end{proof}

 \begin{exa}
     [Totaro]\label{exa:NS}
    Let $C \subset \mb{P}^2$ be a singular, reduced cubic curve and $D$ be a degree $d \ge 4$ curve in $\mb{P}^2$ 
    not containing the singular point of $C$. Denote by $\wt{X}$ the blow-up of $\mb{P}^2$ along the $3d$ (smooth) points on $C$.
    Denote by $C'$ (resp. $D'$) the strict transform of $C$ (resp. $D$) in $\wt{X}$. The line bundle corresponding to the divisor $D'$
    defines a proper, birational morphism 
    \[\phi: \wt{X} \to X,\]
    where $C'$ contracts to a point, say $x \in X$ and $\phi$ is an isomorphism outside $C'$ (see \cite[p. $17$]{totchow}).
    Since $C'$ is a cubic curve, the arithmetic genus of $C'$ is one. As $C'$ is singular, its normalization is of genus $0$.
    Therefore, $X$ has isolated rational singularities. 
    By Corollary \ref{cor:rat}, the Bloch-Gillet-Soul\'{e} cycle class map from $A^1(X)$ to $H^2_{\mr{Hdg}}(X)$ is surjective.
 \end{exa}

\subsection{Non-surjectivity of BGS cycle class map}
Let $C \subset \mb{P}^2$ be a smooth, cubic curve and $S \subset C$ be $10$ distinct points on $C$. Let $\wt{X}$ be the blow-up of $\mb{P}^2$ along $S$.
Denote by $C' \subset \wt{X}$ the strict transform of $C$. Then, $(C')^2=-1$. By Artin's contractibility criterion, $C'$ contracts to a singular point, say $x$:
\[\phi:\wt{X} \to X\, \mbox{ such that } \phi^{-1}(x)=C'.\]

\begin{thm}\label{thm:non-surj}
Let $X$ be as above. Then, for a general choice of $10$ points $S$ in $C$, we have  $A^1(X) \not= A^1_{\mr{ext}}(X)$ and the 
Bloch-Gillet-Soul\'{e} cycle class map from $A^1(X)$ to $H^2_{\mr{Hdg}}(X)$ is not surjective.
\end{thm}

\begin{proof}
Since $S$ is general, there exists two points $p,q \in S$ such that $p-q$ is nontorsion in $\mr{Pic}^0(C)$.
Denote by $E_p, E_q \subset \wt{X}$ the exceptional divisors in $\wt{X}$ corresponding to the blow-ups at $p,q$ respectively.
Consider the invertible sheaf $L:=\mo_{\wt{X}}(E_p-E_q) \in A^1(\wt{X})$. Since $p-q$ is non-torsion, $L|_{C'} \cong \mo_C(p-q)$ is non-torsion, 
therefore defines a non-trivial element in $\mr{Pic}^0(C')$. 
Since $\wt{X}$ is rational (blow-up of $\mb{P}^2$ at finitely many points), $H^1(\mo_{\wt{X}})=0$.
This implies that 
\[\mr{cl}_{\wt{X}}(L) \not= 0\, \mbox{ and } \mr{cl}_E(L|_E)=0, \mbox{ where } \mr{cl}_{\wt{X}}:A^1(\wt{X}) \to H^2_{\mr{Hdg}}(\wt{X})\, \mbox{ and }\, \mr{cl}_E:A^1(E) \to H^2_{\mr{Hdg}}(E).\]
Using this in the diagram \eqref{eq:chow-coh},
we conclude that there exists $\alpha \in H^2_{\mr{Hdg}}(X)$ such that $\phi^*(\alpha)=\mr{cl}_{\wt{X}}(L)$ and $\alpha \not\in \mr{Im}(\mr{cl}_X)$.
Therefore, $L \in A^1_{\mr{ext}}(\phi)$ and does not lie in the image of $A^1(X)$. In particular, 
$A^1(X) \not\cong A^1_{\mr{ext}}(\phi)$ and the Bloch-Gillet-Soul\'{e} cycle class map from $A^1(X)$ to $H^2_{\mr{Hdg}}(X)$ is not surjective.
Moreover, since $A^1_{\mr{ext}}(X) \cong A^1_{\mr{ext}}(\phi)$ (see proof of Theorem \ref{thm:lef}), this implies that 
$A^1(X) \not\cong A^1_{\mr{ext}}(X)$. This proves the theorem.    
\end{proof}

\section{Remarks on higher codimension}
It is not clear whether the extended cycle class map given in Theorem \ref{thm:lef} can be surjective in higher codimension.
 However, there is a similar construction, which gives rise to an analogous surjectivity result.

 \begin{defi}
     Consider the wide subcategory, $\mr{Res}_{\mr{wide}}(X) \subseteq \mr{Res}(X)$ consisting of the 
     same objects as $\mr{Res}(X)$ but a smaller set of morphisms. In particular,    
     a morphism $\psi_{\phi',\phi} \in \mf{M}or(\mr{Res}(X))$ between two resolutions of $X$, 
     \[\phi:\wt{X}_{\phi} \to X,\, \phi':\wt{X}_{\phi'} \to X\, \mbox{ and } \psi_{\phi',\phi}:\wt{X}_{\phi'}  \to \wt{X}_{\phi},\]
     belongs to $\mr{Res}_{\mr{wide}}(X)$ if and only if the induced map between the homologically trivial cycles:
     \[\psi^*_{\phi',\phi}:\mr{CH}_{\mr{hom}}(\wt{X}_{\phi}) \to \mr{CH}_{\mr{hom}}(\wt{X}_{\phi'})\, \mbox{ is surjective}.\]
     An example of such a phenomenon is when $\psi_{\phi',\phi}$ is a composition of blow-ups along smooth centers with 
     vanishing Chow groups in the appropriate degrees (an easy consequence of the blow-up formula). 
     Similar to the extended operational Chow group defined in Definition \ref{defi:ext}, we define a further enlargement:
     \[A^p_{\mr{wide}}(X):=\varprojlim_{\phi \in \mr{Res}_{\mr{wide}}(X)^{\mr{op}}} A^p_{\mr{ext}}(\phi) = \]\[=
   \left\{\left. \left(a_{\phi}\right)_{\phi \in \mr{Res}(X)}\, \right\vert \, a_{\phi} \in A^p_{\mr{ext}}(\phi)\, \mbox{ and } 
   \psi_{\phi',\phi}^*\left(a_{\phi}\right)=a_{\phi'},\, \left(\psi_{\phi',\phi}:\wt{X}_{\phi'} \to \wt{X}_{\phi}\right) \in \mf{M}or\left(\mr{Res}_{\mr{wide}}(X)\right)\right\}.\]
 \end{defi}

\begin{thm}
Let $X$ be a projective variety, $p>0$ an integer and $\dim(X_{\mr{sing}})<p$. Then,
\begin{enumerate}
    \item $A^p(X) \subseteq A^p_{\mr{ext}}(X) \subseteq A^p_{\mr{wide}}(X)$,
    \item the cycle class map $\mr{cl}_p$ mentioned in Theorem \ref{thm:lef} extends to $A^p_{\mr{wide}}(X)$:
    \begin{equation}\label{eq:wide}
        \mr{cl}_p:A^p_{\mr{wide}}(X) \to H^{2p}_{\mr{Hdg}}(X).
    \end{equation}
    \item if the Hodge conjecture holds for smooth, projective varieties, then the cycle class map  \eqref{eq:wide} above
    is surjective for all $p>0$.
\end{enumerate}
\end{thm}

\begin{proof}
The proof of $(1)$ and $(2)$ follows identically as Theorem \ref{thm:lef}$(1)$ and $(2)$.
The proof of $(3)$ follows similarly as Theorem \ref{thm:lef}$(4)$, after a small modification:
in the key commutative diagram \eqref{eq:pic0} replace $A^1(-)$ with $A^p(-)$, $\mr{Pic}^0(-)$ with $\mr{CH}^p_{\mr{hom}}(-)$ and $H^2_{\mr{Hdg}}(X)$ by 
$H^{2p}_{\mr{Hdg}}(X)$. Arguing as in Theorem \ref{thm:lef}, we conclude that for any morphism in $\mf{M}or(\mr{Res}_{\mr{wide}}(X))$:
\[\psi_{\phi',\phi}:\wt{X}_{\phi'} \to \wt{X}_{\phi},\]
we have a surjective morphism between the corresponding operational Chow group:
\begin{equation}\label{eq:surj-1}
    \psi_{\phi',\phi}^*:A^p_{\mr{ext}}(\phi) \twoheadrightarrow A^p_{\mr{ext}}(\phi').
\end{equation}
If the Hodge conjecture holds for all smooth, projective varieties, then for each $\phi \in \mr{Res}(X)$, 
the extended cycle class map from $A^p_{\mr{ext}}(\phi)$ to $H^{2p}_{\mr{Hdg}}(X)$
is surjective. Then, the surjectivity \eqref{eq:surj-1} implies that taking inverse limit induces a surjective morphism:
\[\mr{cl}_p:\varprojlim_{\phi \in \mr{Res}_{\mr{wide}}(X)^{\mr{op}}} A^p_{\mr{ext}}(\phi) \twoheadrightarrow H^{2p}_{\mr{Hdg}}(X).\]
This proves $(3)$ and hence the theorem.    
\end{proof}

\end{document}